\newtheorem{lemma}{Lemma}
\newtheorem{theorem}{Theorem}
\newtheorem{definition}{Definition}
\newcommand{\ds}{\displaystyle}
\newcommand{\lp}{\left (}
\newcommand{\rp}{\right )}
\begin{document}

\title{Erd\H os-Szekeres theorem for cyclic permutations}
\author{
\'Eva Czabarka
\thanks{University of South Carolina, Columbia, SC 29208,
({\tt czabarka@math.sc.edu}), and Visiting Professor, University of Johannesburg, South Africa.} \and
Zhiyu Wang \thanks{University of South Carolina, Columbia, SC 29208,
({\tt zhiyuw@math.sc.edu}). The research of this author was supported in part by the NSF-DMS grants \#1604458, \#1604773, \#1604697
and \#1603823, ``Collaborative Research: Rocky Mountain - Great Plains Graduate Research
Workshops in Combinatorics." as well as NSF-DMS grant \#1600811.} 
}

\maketitle

\begin{abstract}
We provide a cyclic permutation analogue of the Erd\H os-Szekeres theorem. In particular, we show that every cyclic permutation of length $(k-1)(\ell-1)+2$ has either an increasing cyclic sub-permutation of length $k+1$ or a decreasing cyclic sub-permutation of length $\ell+1$, and show that the result is tight. We also characterize all maximum-length cyclic permutations that do not have an increasing cyclic sub-permutation of length $k+1$ or a decreasing cyclic sub-permutation of length $\ell+1$. 
\end{abstract}
 
\section{Introduction}

The study of the longest monotone subsequence of a finite sequence of numbers has inspired a body of research in mathematics, bioinformatics, and computer science. In 1935, Erd\H os and Szekeres  \cite{Erdos}  showed in their namesake theorem that any permutation of $\{1,2,...,k\ell+1\}$ has an increasing subsequence of length $k+1$ or a decreasing subsequence of length $\ell+1$. As a sequence $(a_1,\ldots,a_n)$ can be represented by a set of $n$ points of the form $(i,a_i)$ in the plane,
the Erd\H os-Szekeres theorem can be interpreted geometrically in the following way:  for any set of $k\ell+1$ points in the plane, no two of which are on the
same horizontal or vertical line, there exists a polygonal path of either $k$ positive-slope edges or $\ell$ negative-slope edges. It follows immediately from the Erd\H os-Szekeres theorem that the expected length of a longest increasing subsequence (LIS) in a random permutation of length $n$ is at least $\frac{1}{2}\sqrt{n}$. Moreover, computing LIS is also used in MUMmer systems for aligning whole genomes \cite{genomes}. A natural extension of the well-known Erd\H os-Szekeres theorem is to consider its analogue to cyclic sub-permutations. 

\begin{definition} A cyclic sub-permutation $\tau$ of a cyclic permutation $\sigma$ is the restriction of $\sigma$ on $\tau$, i.e. remove all elements not in $\tau$ from $\sigma$. \end{definition}

For example, $(135)$ is a cyclic sub-permutation of the cyclic permutation $(12345)$. 

\begin{definition} A cyclic permutation is increasing if it can be written in the form $(j_1,j_2,\ldots,j_n)$ with $j_1<j_2<\ldots< j_n$. Similarly, a cyclic permutation is decreasing if it can be written in the form $(j_1,j_2,\ldots,j_n)$ with $j_1>j_2>\ldots> j_n$.
	
\end{definition}

For example, $(6,1,4,2, 7,3,5)$ is a cyclic permutation whose longest increasing cyclic sub-permutation is $(1,2,3,5,6)$ and whose longest decreasing cyclic sub-permutations are $(7,5,4,2)$ or $(7,6,4,2)$.

Cyclic permutations can be viewed as circular lists, which arise naturally in the field of phylogenetics since the genomes of bacteria are considered to be circular. Geometrically, an increasing/decreasing cyclic subsequence of a circular list corresponds to a polygonal path of positive/negative-slope edges when the points are drawn on the side of a cylinder. Albert et al. in \cite{LIS} give a Monte Carlo algorithm to compute the longest increasing circular subsequence with worst case run-time $O(n^{3/2} \log n)$ and also showed that the expected length $\mu(n)$ of the longest increasing circular subsequence satisfies $\ds\lim_{n\to \infty} \frac{\mu(n)}{2\sqrt{n}} = 1$. We extend the Erd\H os-Szekeres theorem to cyclic permutations and examine the structures of the extremal constructions achieving the lower bound for our theorem.

\begin{definition}
	Given positive integers $k$ and $\ell$, let $\alpha(k,\ell)$ be the smallest positive integer $n$, such that for any cyclic permutation of length $n$, there exists either an increasing cyclic sub-permutation of length $k+1$, or a decreasing cyclic sub-permutation of length $\ell+1$.
\end{definition}

We show in Section~\ref{sc:maintheorem} that
\begin{theorem}\label{th:main} For $k, \ell\geq 1$,
\[ \alpha(k,\ell) =  (k-1)(\ell-1)+2.\]
\end{theorem}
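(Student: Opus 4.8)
The plan is to establish the two inequalities $\alpha(k,\ell)\le (k-1)(\ell-1)+2$ and $\alpha(k,\ell)\ge (k-1)(\ell-1)+2$ separately, built on one structural reduction that I would isolate as a short lemma. Given a cyclic permutation $\sigma$ on $\{1,\dots,n\}$, cut it immediately before its largest entry $M=n$, producing a \emph{linear} permutation $\pi=(M,a_2,\dots,a_n)$ with $M$ in front. The key observations are: (a) prepending $M$ to any increasing subsequence of $(a_2,\dots,a_n)$ gives an increasing cyclic sub-permutation one longer, since rotating to read the increasing run first and $M$ last produces a strictly increasing cycle; and (b) prepending $M$ to any decreasing subsequence gives a decreasing cyclic sub-permutation one longer, since a linear decreasing sequence is already decreasing cyclically. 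More generally, relative to the cut an increasing cyclic sub-permutation decomposes as two increasing subsequences $A$ (earlier positions, larger values) and $B$ (later positions, smaller values) with $\max B<\min A$, and symmetrically a decreasing one splits into decreasing $A'$ (earlier, smaller) and $B'$ (later, larger) with $\min B'>\max A'$. I would record this ``two-chain'' description as the main tool.

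For the upper bound, let $\sigma$ have length $(k-1)(\ell-1)+2$ and cut before $M$ as above. The suffix $(a_2,\dots,a_n)$ has length $(k-1)(\ell-1)+1$, so by the classical Erd\H os--Szekeres theorem with parameters $k$ and $\ell$ it contains an increasing subsequence of length $k$ or a decreasing subsequence of length $\ell$. Prepending $M$ via observation (a) or (b) yields an increasing cyclic sub-permutation of length $k+1$ or a decreasing cyclic sub-permutation of length $\ell+1$. This gives $\alpha(k,\ell)\le (k-1)(\ell-1)+2$ with essentially no computation.

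For the lower bound I must exhibit a cyclic permutation of length $N=(k-1)(\ell-1)+1$ in which every increasing cyclic sub-permutation has length at most $k$ and every decreasing one at most $\ell$. The naive attempt — taking the suffix after the cut to be the standard linear extremal permutation (a row of $k-1$ decreasing blocks of size $\ell-1$ with block values increasing), which has longest increasing subsequence $k-1$ and longest decreasing subsequence $\ell-1$ — \emph{fails}: the two-chain description shows that two consecutive full decreasing blocks combine across the wrap-around into a single decreasing cyclic sub-permutation of length $2(\ell-1)>\ell$ once $\ell\ge 3$. The cure is to ``interleave'' value ranges so that no earlier chain and later chain of the same monotone type are value-separated. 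I would use the modular (cyclic-shift) construction $\sigma(i)\equiv (\ell-1)\,i \pmod{N}$, which is a genuine permutation since $\gcd(\ell-1,N)=1$; each forward step raises the value by $\ell-1$ modulo $N$, producing exactly the interleaving that blocks stacking. One then verifies, via the two-chain decomposition together with the structure of modular sequences (a three-distance-theorem-type analysis), that the longest increasing and decreasing cyclic sub-permutations are exactly $k$ and $\ell$; the fact that the continued fraction of $(\ell-1)/N$ is $[0;k-1,\ell-1]$ is what makes these two parameters emerge precisely.

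The hard part will be this lower-bound verification: the upper bound is immediate once the cut reformulation is in hand, but bounding \emph{all} wrap-around chains in the construction — that is, showing no admissible pair $(A,B)$ of increasing chains exceeds total length $k$ and no pair $(A',B')$ of decreasing chains exceeds $\ell$ — is where the real work lies, since these wrapping configurations are exactly what the cyclic setting adds over the linear case. I would expect the full characterization of extremal permutations promised in the abstract to require pushing this same chain analysis to its equality cases.
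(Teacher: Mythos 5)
Your upper bound is correct and is essentially the paper's argument in mirror image: the paper cuts the cycle at the minimum element $1$ and applies Erd\H{o}s--Szekeres to the remaining $(k-1)(\ell-1)+1$ entries, prepending $1$ to an increasing subsequence or appending it to a decreasing one; you cut at the maximum and prepend it the other way. Your ``two-chain'' description of wrap-around monotone cyclic sub-permutations is also correct, and your diagnosis of why the naive linear extremal example fails (two consecutive decreasing blocks gluing across the wrap into a decreasing cyclic sub-permutation of length $2(\ell-1)$) is exactly the right observation. Moreover your modular construction $\sigma(i)\equiv(\ell-1)i \pmod{N}$ with $N=(k-1)(\ell-1)+1$ is, after relabelling, literally one of the paper's two extremal structures (for $k=4$, $\ell=5$ it produces $(1,5,9,13,4,8,12,3,7,11,2,6,10)$, which is the paper's structure (ii)).

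The genuine gap is that you never actually verify the lower bound: the sentence ``one then verifies, via \dots a three-distance-theorem-type analysis, that the longest increasing and decreasing cyclic sub-permutations are exactly $k$ and $\ell$'' is precisely the claim that needs a proof, and you defer it. You should also know that no three-distance or continued-fraction machinery is needed; the paper closes this with an elementary counting argument that your own setup already supports. Partition the $(k-1)(\ell-1)$ entries other than the minimum into $k-1$ decreasing runs $D_1,\dots,D_{k-1}$ of consecutive positions (each of length $\ell-1$) and, dually, into $\ell-1$ increasing runs $C_1,\dots,C_{\ell-1}$. An increasing cyclic sub-permutation $L$ meets each $D_j$ in at most $2$ elements (your two-chain decomposition restricted to a decreasing run); if $L$ contains the minimum it meets each $D_j$ in at most $1$ element, and if not, its smallest element lies in some $D_j$, it meets that $D_j$ in at most $2$ elements and every other $D_i$ in at most $1$, because the value ranges of the runs are nested in a way that forbids taking two elements from two different runs. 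Either way $|L|\le k$, and the decreasing case is symmetric using the $C_i$. Writing out that one paragraph of counting is what separates your proposal from a complete proof.
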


\begin{definition}
	Given positive integers $k$ and $\ell$, let $\mathbb{C}_{k,\ell}$ be the set of cyclic permutations of length
	$(k-1)(\ell-1)+1$ that contain no increasing cyclic sub-permutation of length $k+1$, or decreasing cyclic sub-permutation of length $\ell+1$;
	let $\mathbb{S}_{k,\ell}$ be the set of linear permutations of length
	$k\ell$ that contain no increasing linear sub-permutation of length $k+1$, or decreasing linear sub-permutation of length $\ell+1$; and 
	let $\mathbb{Y}_{\ell,k}$ be the
	set of standard Young tableaux on a $\ell\times k$ rectangular diagram, i.e. the set of $\ell\times k$ matrices where the set of entries is $\{1,2,\ldots,k\ell\}$ 
	and each row and column forms an increasing sequence.
	\end{definition}
	
It was observed by Knuth [\cite{Knuth}, Exercise 5.1.4.9] (see also [\cite{Stanley}, Example 7.23.19(b)]) that the permutations in $\mathbb{S}_{k,\ell}$ are in bijection with $\mathbb{Y}_{\ell,k} \times \mathbb{Y}_{\ell,k}$ via the Robinson-Schensted correspondence. The hook-length formula \cite{hooklength} expresses the number of standard Young tableaux and allows us to directly compute $\vert\mathbb{S}_{k,\ell}\vert$, which increases rapidly as $k,\ell$ increase (see sequence  A060854 in the On-Line Encyclopedia of Integer Sequences). In particular, WLOG, assuming $k \leq l$ (since $|\mathbb{S}_{k,\ell}| = |\mathbb{S}_{\ell,k}|$), we have that

$$|\mathbb{S}_{k,\ell}| = \lp \frac{(\ell k)!}{1^1 2^2 \ldots k^k (k+1)^k \ldots \ell^k (\ell+1)^{k-1} \ldots (k+\ell-1) }\rp^2.$$

Although the Robinson-Schensted correspondence establishes the bijection between $\mathbb{S}_{k,l}$ and $\mathbb{Y}_{\ell,k} \times \mathbb{Y}_{\ell,k}$,
it is an algorithmic procedure which can be difficult to analyze.
Romik, in \cite{romik}, gave a simple description of the mapping from pairs of square Young Tableaux to elements of $\mathbb{S}_{k,k}$. Before we state the theorem, let us introduce a few definitions.

\begin{definition}\label{dfn:grid-function} The {\em grid-function} of an $\vec{a}=[a_1,\ldots,a_{k\ell}]\in\mathbb{S}_{k,\ell}$ is $\gamma_{\vec{a}}:[k\ell]\rightarrow[\ell]\times[k]$, defined 
	by $\gamma_{\vec{a}}(t)=(i,j)$ where $i$ is the length of the longest decreasing subsequence of $\vec{a}$ ending at $a_t$ and
	$j$ is the length of the longest increasing subsequence of $\vec{a}$ ending at $a_t$.
\end{definition}

\begin{definition}\label{dfn:grid-pair}
	The {\em grid-ranking} $R_{\vec{a}}=(r_{ij})$ and
	{\em grid-valuation} $V_{\vec{i,j}}=(v_{ij})$ are
	$\ell \times k$ matrices defined by $r_{ij}=\gamma^{-1}_{\vec{a}}(i,j)$, and $v_{ij}=a_{\gamma^{-1}(\ell+1-i,j)}$. 	
\end{definition}

Note that the Erd\H{o}s-Szekeres theorem implies that for a linear permutation $\vec{a}\in\mathbb{S}_{k,\ell}$,
the longest increasing subsequence has length $k$ and the longest decreasing subsequence has length $\ell$ (as both $k(\ell-1)+1$ and $(k-1)\ell+1$ are at most $ k\ell$), 
which means that $\gamma_{\vec{a}}$ indeed defines a function. 

Working towards our characterization of $\mathbb{C}_{k,\ell}$, Section~\ref{sc:linear} reproves the following result of \cite{romik}, partially for the sake of self-containment and partially for its use in thse proof of Theorem \ref{th:lowerbound}.
\begin{theorem}\label{th:linear} For positive integers $k,\ell$, $\mathbb{S}_{k,\ell}$ is isomorphic to $\mathbb{Y}_{\ell,k}\times\mathbb{Y}_{\ell,k}$. In particular, $\phi: \mathbb{S}_{k,\ell} \to \mathbb{Y}_{\ell,k}\times\mathbb{Y}_{\ell,k}$ defined by $\phi (\vec{a}) = (R_{\vec{a}}, V_{\vec{a}})$ is a bijection. 
\end{theorem}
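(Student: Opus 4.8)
The plan is to read $R_{\vec a}$ and $V_{\vec a}$ as two superimposed orderings of the $k\ell$ cells of the $\ell\times k$ grid and to verify in turn that $\phi$ lands in $\mathbb{Y}_{\ell,k}\times\mathbb{Y}_{\ell,k}$, is injective, and is surjective. The first step is to show that the grid-function $\gamma_{\vec a}$ is a bijection from $[k\ell]$ onto $[\ell]\times[k]$, so that each $r_{ij}=\gamma_{\vec a}^{-1}(i,j)$ is a single well-defined position. For injectivity of $\gamma_{\vec a}$ I would use the standard extension argument: if $s<t$ and $a_s<a_t$, then appending $a_t$ to a longest increasing subsequence ending at $a_s$ strictly increases the second coordinate, while if $a_s>a_t$ it strictly increases the first; either way $\gamma_{\vec a}(s)\neq\gamma_{\vec a}(t)$. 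Since the Erd\H os--Szekeres theorem forces the image into $[\ell]\times[k]$, an injection between two $k\ell$-element sets is a bijection. Consequently $R_{\vec a}$ is a filling of the $\ell\times k$ rectangle by $[k\ell]$ and $V_{\vec a}$ is a filling of it by the values $a_1,\dots,a_{k\ell}$.

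Next I would establish the two monotonicity lemmas that force the tableau conditions. If two positions $s<t$ share the same longest-decreasing length (same row index $i$), then necessarily $a_s<a_t$ and hence the longest-increasing length strictly increases, so along a fixed row of the grid the position, the value, and the column index all increase together. Symmetrically, if $s<t$ share the same longest-increasing length (same column $j$), then $a_s>a_t$ and the row index strictly increases while the value decreases. The first lemma shows that the entries of $R_{\vec a}$ increase along rows and, applied columnwise, down columns, so $R_{\vec a}\in\mathbb{Y}_{\ell,k}$. For $V_{\vec a}$, whose rows are reindexed by $\ell+1-i$, the first lemma makes its rows increase and the second lemma together with the row-reversal makes its columns increase, so $V_{\vec a}\in\mathbb{Y}_{\ell,k}$ and $\phi$ is well defined.

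Injectivity is then immediate from an explicit reconstruction. Given $(R_{\vec a},V_{\vec a})$, each position $t\in[k\ell]$ occurs exactly once as some entry $r_{ij}$, and the defining relation $v_{ij}=a_{r_{\ell+1-i,j}}$ rearranges to $a_{r_{ij}}=v_{\ell+1-i,j}$, which recovers the value in every position. Thus $\vec a$ is determined by its image and $\phi$ is injective.

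The main obstacle is surjectivity. I would take $(P,Q)\in\mathbb{Y}_{\ell,k}\times\mathbb{Y}_{\ell,k}$, let $r_{ij}$ denote the cell of $P$ holding $t$, define $\vec a$ by $a_{r_{ij}}=q_{\ell+1-i,j}$, and write $W$ for $Q$ with its rows reversed, so that $W$ increases along rows and decreases down columns while $P$ increases along rows and down columns. Reading the cells $(i,1),\dots,(i,k)$ gives an increasing subsequence and reading $(1,j),\dots,(\ell,j)$ a decreasing one, yielding increasing and decreasing subsequences of lengths at least $j$ and at least $i$ ending at $r_{ij}$; writing $\gamma_{\vec a}(r_{ij})=(I,J)$ this is the lower bound $I\ge i$, $J\ge j$. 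The crux is the matching upper bound: I would show that along any increasing pair of cells the column index strictly increases and along any decreasing pair the row index strictly increases. Each follows from a short $2\times 2$ rectangle argument—e.g.\ assuming the column index fails to increase forces, through the row and column monotonicities of $P$ and of $W$, the contradictory inequalities $i_1<i_2$ and $i_1>i_2$. This bounds the longest increasing and decreasing subsequences by $k$ and $\ell$, so $\vec a\in\mathbb{S}_{k,\ell}$; then $\gamma_{\vec a}$ is an injection $[\ell]\times[k]\to[\ell]\times[k]$ whose values are coordinatewise at least $(i,j)$, which forces it to be the identity, giving $\gamma_{\vec a}(r_{ij})=(i,j)$ and hence $\phi(\vec a)=(P,Q)$. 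Alternatively one can bypass the construction entirely: having shown $\phi$ injective, surjectivity follows from the Robinson--Schensted count $|\mathbb{S}_{k,\ell}|=|\mathbb{Y}_{\ell,k}|^2$ noted above, since an injection between finite sets of equal size is a bijection.
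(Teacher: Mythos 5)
Your proposal is correct and follows essentially the same route as the paper: bijectivity of $\gamma_{\vec{a}}$ via the extension argument, the two monotonicity facts forcing $R_{\vec{a}},V_{\vec{a}}\in\mathbb{Y}_{\ell,k}$, injectivity by explicit reconstruction, and surjectivity by building $\vec{a}$ from $(P,Q)$, ruling out long monotone subsequences with the same-column/same-row contradiction, and using the coordinatewise bound $\gamma_{\vec{a}}(r_{ij})\ge(i,j)$ plus bijectivity to force equality. The counting shortcut you mention at the end is a valid alternative for surjectivity, but the paper (like your main line) avoids it in order to stay self-contained.
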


In contrast to the exponential size of $\mathbb{S}_{k,l}$, $\mathbb{C}_{k,l}$ has at most $2$ elements and we can characterize them precisely. In particular, 
in Section~\ref{sc:lowerbound}, we show the following theorem:
\begin{theorem}\label{th:lowerbound} For $k, \ell\geq 1$, let $\mathbb{C}_{k,\ell}$ denote the set of cyclic permutations of $[(k-1)(\ell-1)+1]$ that contain no increasing cyclic sub-permutation of length $k+1$, or decreasing cyclic sub-permutation of length $\ell+1$. Then we have:
\begin{enumerate}[label=\rm{(\arabic*)}]
\item\label{atmost2} If $\min(k,\ell)\le 2$ then $\vert\mathbb{C}_{k,\ell}\vert=1$ and the single element of $\mathbb{C}_{k,\ell}$ is the decreasing cyclic permutation when $k\le 2$ and
the increasing cyclic permutation when $k\ge 3$.
\item\label{atleast3} If $\min(k,\ell)\ge 3$ then $\vert\mathbb{C}_{k,\ell}\vert=2$, and $(1,a_1,\ldots,a_{(k-1)(\ell-1)})\in \mathbb{C}_{k,\ell}$ precisely when the sequence it satisfies one of the following:
\begin{enumerate}[label=\rm{(\roman*)}]
\item\label{struct1} For each $(i,j)\in[\ell-1]\times[k-1]$,  $$a_{(j-1)(\ell-1)+i}=(\ell-1-i)(k-1)+j+1.$$
\item\label{struct2} For each $(i,j)\in[\ell-1]\times[k-1]$,
$$a_{(i-1)(k-1)+j}=(j-1)(\ell-1)+(\ell-i)+1.$$
\end{enumerate}
\end{enumerate}
\end{theorem}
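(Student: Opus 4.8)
The plan is to reduce the cyclic problem to a linear one and then isolate the extremal permutations using the grid-function machinery of Theorem~\ref{th:linear}. First I would normalize each cyclic permutation in $\mathbb{C}_{k,\ell}$ so that it reads $(1,a_1,\dots,a_m)$ with $m=(k-1)(\ell-1)$, and study the linear word $b=(a_1,\dots,a_m)$, a permutation of $\{2,\dots,m+1\}$. Classifying cyclic sub-permutations by whether they contain the root $1$: an increasing cyclic sub-permutation of length $k+1$ through $1$ is exactly an increasing subsequence of $b$ of length $k$, and a decreasing cyclic sub-permutation of length $\ell+1$ through $1$ is exactly a decreasing subsequence of $b$ of length $\ell$ (since $1$ is the global minimum it must begin such an increasing run read cyclically and end such a decreasing run). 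A cyclic sub-permutation avoiding $1$ lives entirely in $b$; being a rotation of a monotone word it has exactly one cyclic descent, so an increasing cyclic sub-permutation of $b$ is precisely a pair $(H,L)$ of increasing subsequences with every position of $H$ before every position of $L$ and every value of $H$ above every value of $L$ (an increasing circular subsequence in the spirit of \cite{LIS}), and dually for the decreasing case. Thus membership in $\mathbb{C}_{k,\ell}$ is equivalent to four conditions on $b$: (a) no increasing subsequence of length $k$, (b) no decreasing subsequence of length $\ell$, (c) no increasing circular subsequence of length $k+1$, and (d) no decreasing circular subsequence of length $\ell+1$. Conditions (a) and (b) together with $|b|=(k-1)(\ell-1)$ and the Erd\H os--Szekeres theorem force $b$ (after subtracting $1$) into $\mathbb{S}_{k-1,\ell-1}$, so the task reduces to characterizing those $b\in\mathbb{S}_{k-1,\ell-1}$ that in addition satisfy (c) and (d).

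Next I would dispatch the easy direction and the governing symmetry. Reversing the positions of $b$ (equivalently replacing each value $v$ by $m+1-v$) exchanges (c) with (d) and carries the word of type~\ref{struct1} to the word of type~\ref{struct2}, so it suffices to verify one candidate. Both are the standard staircase of $k-1$ decreasing blocks of length $\ell-1$ (respectively its transpose), and a direct reading shows the longest increasing subsequence has length $k-1$, the longest decreasing subsequence has length $\ell-1$, and that no high-early/low-late pair of increasing runs reaches length $k+1$ and no decreasing analogue reaches $\ell+1$. The degenerate regime $\min(k,\ell)\le 2$ I would handle by hand: when $\ell\le 2$ the blocks have length $1$ and the unique surviving word is increasing, when $k\le 2$ there is a single block and the unique surviving word is decreasing, and the two candidate words coincide precisely when $\min(k,\ell)\le 2$, which accounts for $\vert\mathbb{C}_{k,\ell}\vert=1$ there and $\vert\mathbb{C}_{k,\ell}\vert=2$ otherwise.

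The main obstacle is uniqueness: showing that \emph{no other} element of the (exponentially large) set $\mathbb{S}_{k-1,\ell-1}$ survives (c) and (d). Here I would exploit the grid-function $\gamma_b$ of Definition~\ref{dfn:grid-function}, which by Theorem~\ref{th:linear} is a bijection $[m]\to[\ell-1]\times[k-1]$ recording, at each position, the pair (longest decreasing run, longest increasing run) ending there; in particular every grid coordinate is realized exactly once. In these terms (c) forbids two increasing subsequences, one lying entirely to the upper-left of the other, whose lengths sum to $k+1$, while (d) forbids the dual decreasing configuration. The crux is to prove that, because every increasing level $1,\dots,k-1$ and every decreasing level $1,\dots,\ell-1$ is occupied, any word whose value order deviates from the monotone block pattern must contain such a forbidden pair: one locates a minimal inversion of the block order and uses the guaranteed occupancy of the intermediate grid levels to extend it into an increasing circular run of length $k+1$ or a decreasing circular run of length $\ell+1$.

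Propagating this rigidity level by level pins the relative order of the values along each fibre of $\gamma_b$, which forces $b$ to be of type~\ref{struct1} or~\ref{struct2}, and the split into the two types then flows from the position/value reversal symmetry identified above. I expect this propagation step — converting the two circular bounds into a complete monotone ordering of the values through the grid function — to be the technical heart of the proof, while the reduction and the forward verification are comparatively routine.
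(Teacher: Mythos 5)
Your reduction is the same as the paper's: split cyclic sub-permutations according to whether they contain the element $1$, observe that the ones through $1$ correspond to linear monotone subsequences of $b$ of lengths $k$ and $\ell$ so that Erd\H{o}s--Szekeres forces $b$ (shifted down by $1$) into $\mathbb{S}_{k-1,\ell-1}$, and then impose the two residual circular conditions; the paper packages exactly this as Lemma~\ref{lem:equiv} and the set $\mathbb{C}^{\star}_{k,\ell}$, and your description of an increasing cyclic sub-permutation avoiding $1$ as a high-early/low-late pair of increasing runs is correct. The problem is that the step you yourself flag as the technical heart --- showing that conditions (c) and (d) leave only two survivors in the exponentially large set $\mathbb{S}_{k-1,\ell-1}$ --- is never carried out. ``Locate a minimal inversion of the block order and use the guaranteed occupancy of the intermediate grid levels to extend it into a forbidden circular run'' is a statement of intent, not an argument: the ``block order'' is not defined before the word has been proved to be a block word, no forbidden configuration is exhibited, and nothing in the sketch explains why the outcome is exactly two structures rather than one or many.

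For comparison, the paper's uniqueness argument is concrete. Writing $\gamma$ for the grid function of $\vec{\rho}\in\mathbb{C}^{\star}_{k,\ell}$, it forms the increasing rows $C_i$ and decreasing columns $D_j$ given by Lemma~\ref{lm:injection} part~\ref{valuationorder}, and branches on whether $\gamma^{-1}(\ell-1,1)<\gamma^{-1}(1,k-1)$ or not; this dichotomy is what produces exactly the two structures~\ref{struct1} and~\ref{struct2}. In the first case it splices a prefix of $D_{k-1}$ onto a suffix of $D_1$ to obtain a cyclic sub-permutation of length $\ell+1$, whose failure to be decreasing yields $c_{\ell-i,k-1}<c_{\ell-i-1,1}$, and induction on $i$ then determines every value $a_{\gamma^{-1}(i,j)}$; a second, separate splicing of $C_{\ell-1}$ onto $C_1$ determines every position $\gamma^{-1}(i,j)$. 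Both the values and the positions along the grid must be pinned down to recover the word, and your sketch addresses neither half explicitly, so the entire content of part~\ref{atleast3} beyond what Lemma~\ref{lm:lower} already gives remains unproved. A smaller point: the symmetry you invoke (position reversal, equivalently value complementation) exchanges increasing with decreasing and hence maps $\mathbb{C}_{k,\ell}$ to $\mathbb{C}_{\ell,k}$, carrying structure~\ref{struct1} for $(k,\ell)$ to structure~\ref{struct2} for $(\ell,k)$; it can be used, but only by quantifying over both parameter orders, and the involution that exchanges the two structures within a fixed $\mathbb{C}_{k,\ell}$ is permutation inversion, not reversal.
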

Note that when $\min(k,\ell)=2$, the structures described in parts~\ref{atleast3}~\ref{struct1} and~\ref{struct2} are the same and coincide with the single structure described in part~\ref{atmost2}. Figure~\ref{fig:extremal example} illustrates the structures in parts~\ref{atleast3}~\ref{struct1} and~\ref{struct2} for $k=4$ and $\ell=5$.
The two extremal examples are $(1,11,8,5,2,12,9,6,3,13,10,7,4)$ and $(1,5,9,13,4,8,12,3,7,11,2,6,10)$ respectively.

	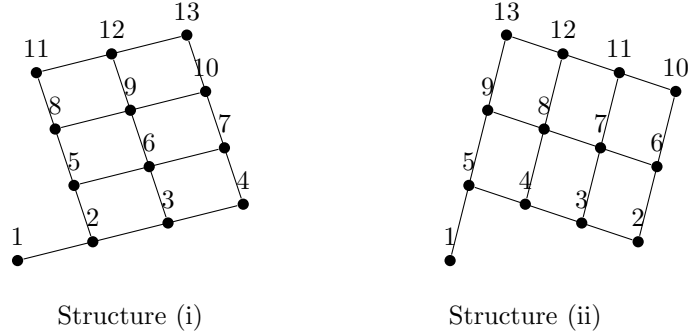
\begin{figure}[hbt]
		\begin{center}
			\tikzstyle{vertex}=[circle,fill=black,inner sep=1.5pt]
			\tikzstyle{straight edge}=[]
			
			\begin{tikzpicture}
			\begin{pgfonlayer}{nodelayer}
			\node [style=vertex,label=above:{$1$}] (0) at (0.25, 0.25) {};
			\node [style=vertex,label=above:{$2$}] (1) at (1.25, 0.5) {};
			\node [style=vertex,label=above:{$3$}] (2) at (2.25, 0.75) {};
			\node [style=vertex,label=above:{$4$}] (3) at (3.25, 1) {};
			\node [style=vertex,label=above:{$11$}] (4) at (0.5, 2.75) {};
			\node [style=vertex,label=above:{$12$}] (5) at (1.5, 3) {};
			\node [style=vertex,label=above:{$13$}] (6) at (2.5, 3.25) {};
			\node [style=vertex,label=above:{$8$}] (7) at (0.75, 2) {};
			\node [style=vertex,label=above:{$9$}] (8) at (1.75, 2.25) {};
			\node [style=vertex,label=above:{$10$}] (9) at (2.75, 2.5) {};
			\node [style=vertex,label=above:{$5$}] (10) at (1, 1.25) {};
			\node [style=vertex,label=above:{$6$}] (11) at (2, 1.5) {};
			\node [style=vertex,label=above:{$7$}] (12) at (3, 1.75) {};
			\node at (1.75,-.5) {Structure~\ref{struct1}};
			\node [style=vertex,label=above:{$1$}] (A1) at (6, 0.25) {};
			\node [style=vertex,label=above:{$2$}] (A2) at (8.5, 0.5) {};
			\node [style=vertex,label=above:{$3$}] (A3) at (7.75, 0.75) {};
			\node [style=vertex,label=above:{$4$}] (A4) at (7, 1) {};
			\node [style=vertex,label=above:{$5$}] (A5) at (6.25, 1.25) {};
			\node [style=vertex,label=above:{$6$}] (A6) at (8.75, 1.5) {};
			\node [style=vertex,label=above:{$7$}] (A7) at (8, 1.75) {};
			\node [style=vertex,label=above:{$8$}] (A8) at (7.25, 2) {};
			\node [style=vertex,label=above:{$9$}] (A9) at (6.5, 2.25) {};
			\node [style=vertex,label=above:{$10$}] (A10) at (9, 2.5) {};
			\node [style=vertex,label=above:{$11$}] (A11) at (8.25, 2.75) {};
			\node [style=vertex,label=above:{$12$}] (A12) at (7.5, 3) {};
			\node [style=vertex,label=above:{$13$}] (A13) at (6.75, 3.25) {};	
			\node at (7,-.5) {Structure~\ref{struct2}};
			
			\end{pgfonlayer}
			\begin{pgfonlayer}{edgelayer}
			\draw [style=straight edge] (0) to (3);
			\draw [style=straight edge] (10) to (12);
			\draw [style=straight edge] (7) to (9);
			\draw [style=straight edge] (4) to (6);
			\draw [style=straight edge] (4) to (1);
			\draw [style=straight edge] (5) to (2);
			\draw [style=straight edge] (6) to (3);
			\draw [style=straight edge] (A1) to (A13);
			\draw [style=straight edge] (A2) to (A10);
			\draw [style=straight edge] (A3) to (A11);
			\draw [style=straight edge] (A4) to (A12);
			\draw [style=straight edge] (A5) to (A2);
			\draw [style=straight edge] (A9) to (A6);
			\draw [style=straight edge] (A13) to (A10);	
			\end{pgfonlayer}
			\end{tikzpicture}
		\end{center}
		\caption{Extremal examples for $k=4$ and $\ell=5$.}\label{fig:extremal example}
	\end{figure}

\section{Proof of Theorem~\ref{th:main}}\label{sc:maintheorem}
 In this section, we determine $\alpha(k,\ell)$ exactly.
 
\begin{lemma}\label{lm:upper} For $k, \ell\geq 1$,
\[ \alpha(k,\ell) \le  (k-1)(\ell-1)+2.\]
\end{lemma}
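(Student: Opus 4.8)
The plan is to reduce the cyclic statement to the classical (linear) Erd\H{o}s--Szekeres theorem by cutting the cycle open at the smallest element and exploiting the wrap-around exactly once. Concretely, I would write the given cyclic permutation $\sigma$ of length $n=(k-1)(\ell-1)+2$ as a linear sequence $\pi=(1,a_2,\ldots,a_n)$ by cutting immediately before the entry $1$. Since $\pi$ is just one linearization of $\sigma$, every ordinary (linear) increasing subsequence of $\pi$ is in particular an increasing cyclic sub-permutation of $\sigma$, and likewise for decreasing subsequences. The whole point is that the single entry $1$ sitting at the front can be \emph{reused} at the back through the cyclic wrap: any decreasing subsequence of $\pi$ of length at least $2$ necessarily avoids the entry $1$ (as $1$ is the global minimum it would have to be the last term, yet it occupies the first position), so it can be extended by appending $1$ through the wrap-around, producing a longer decreasing cyclic sub-permutation whose position sequence has exactly one descent.

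I would then argue by contradiction, assuming $\sigma$ has no decreasing cyclic sub-permutation of length $\ell+1$. \textbf{First step:} the wrap-around extension above shows that the longest decreasing subsequence of $\pi$ has length at most $\ell-1$; otherwise a decreasing subsequence of length $\ell$ in $\pi$ would yield, after appending $1$, a decreasing cyclic sub-permutation of length $\ell+1$. \textbf{Second step:} discard the entry $1$ and apply the linear Erd\H{o}s--Szekeres theorem to $(a_2,\ldots,a_n)$, which has length $(k-1)(\ell-1)+1$; with parameters $k-1$ and $\ell-1$, this sequence must contain either an increasing subsequence of length $k$ or a decreasing subsequence of length $\ell$. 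Since the latter is ruled out by the cap of $\ell-1$ from the first step, we obtain an increasing subsequence $u_1<\cdots<u_k$ among $a_2,\ldots,a_n$. \textbf{Third step:} prepend the entry $1$, which sits at the front of $\pi$ and is smaller than every $u_i$, to obtain $1<u_1<\cdots<u_k$, an increasing subsequence of $\pi$ of length $k+1$, hence an increasing cyclic sub-permutation of $\sigma$ of length $k+1$. This contradiction yields $\alpha(k,\ell)\le (k-1)(\ell-1)+2$.

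The delicate points I expect to spend the most care on are, first, verifying that the wrap-around constructions genuinely satisfy the definition of an increasing/decreasing cyclic sub-permutation, i.e.\ that inserting $1$ at the appropriate end creates exactly one descent in the position sequence while keeping the values strictly monotone, and second, pinning down the arithmetic of the reduction. The crucial structural insight, and the reason a more naive attempt fails, is that one should \emph{not} apply Erd\H{o}s--Szekeres to all $n$ entries at once (which only gives the weaker bound $n\le k(\ell-1)$), nor try to gain a $+1$ on both the increasing and the decreasing sides simultaneously (a single extremal entry can be recycled at only one end of the linearization). Instead one gains a single $+1$ on the decreasing side via the wrap, removes the entry $1$ so that precisely $(k-1)(\ell-1)+1$ entries remain, and lets the off-by-one in the Erd\H{o}s--Szekeres threshold do the rest. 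Finally I would check the degenerate cases $k=1$ and $\ell=1$ separately, where $n=2$ and the unique cyclic permutation of length two is at once an increasing and a decreasing cyclic sub-permutation of length two, so the bound holds trivially.
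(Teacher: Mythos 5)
Your proof is correct and is essentially the paper's own argument: cut the cycle at the entry $1$, apply the linear Erd\H{o}s--Szekeres theorem with parameters $k-1$ and $\ell-1$ to the remaining $(k-1)(\ell-1)+1$ entries, and then reattach $1$ at the front of an increasing subsequence or (via the wrap-around) at the back of a decreasing one. The paper simply phrases this as a direct two-case analysis rather than your contradiction framing; the content is identical.
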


\begin{proof}
	The statement is obviously true when $\min(k,\ell)=1$, so assume that $\min(k,\ell)\ge 2$. Without loss of generality $\pi = (1, a_1, a_2, ...., a_{(k-1)(\ell-1)+1})$.
	Consider the sequence $[a_1, a_2, ...,a_{(k-1)(\ell-1)+1}]$. By the Erd\H os-Szekeres theorem, it has either an increasing subsequence of length $k$ or a decreasing subsequence of length $\ell$. If there is an increasing subsequence $[a_{i_1}, a_{i_2},...,a_{i_{k}}]$, then $(1, a_{i_1}, a_{i_2},...,a_{i_{k}})$ would form an increasing cyclic sub-permutation of $\pi$ of length $k+1$. Otherwise, if there is a decreasing subsequence $[a_{i_1}, a_{i_2},...,a_{i_{\ell}}]$, then $(a_{i_1}, a_{i_2},...,a_{i_{\ell}},1)$ would form a decreasing cyclic sub-permutation of $\pi$ of length $\ell+1$.
	\end{proof}
	
\begin{lemma}\label{lm:lower} For $k, \ell\geq 1$,
\[ \alpha(k,\ell) >  (k-1)(\ell-1)+1.\]
In particular, if $\min(k,\ell)\ge 2$,  $\pi=(1,a_1,\ldots,a_{(k-1)(\ell-1)})$ where the sequence $a_i$ is given by one of the formulas in Theorem~\ref{th:lowerbound}
part~\ref{atleast3}~\ref{struct1} or~\ref{struct2}, then $\pi$ does not have an increasing cyclic sub-permutation of length $k+1$ or a decreasing cyclic sub-permutation of length $\ell+1$.
\end{lemma}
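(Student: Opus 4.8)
The plan is to normalize $\pi=(1,a_1,\ldots,a_m)$ with $m=(k-1)(\ell-1)$, exhibit it as the claimed avoider (the inequality $\alpha(k,\ell)>(k-1)(\ell-1)+1$ then being immediate, and $\min(k,\ell)=1$ being trivial), and reduce every monotone cyclic sub-permutation of $\pi$ to one or two monotone \emph{linear} subsequences of the word $[a_1,\ldots,a_m]$. I would first read off the grid structure of this word for the construction in part~\ref{struct1}: it is the concatenation of $k-1$ blocks $B_1,\ldots,B_{k-1}$, where $B_j$ occupies positions $(j-1)(\ell-1)+1,\ldots,j(\ell-1)$ and is a decreasing run of length $\ell-1$ (with within-block index $i$), while a step from $B_j$ to a later block $B_{j'}$ raises the value precisely when $i$ does not increase. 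Two facts follow directly from the formula and will be used throughout: every increasing subsequence meets each block at most once, so it uses distinct block-indices and has length at most $k-1$; and every decreasing subsequence has strictly increasing within-block index $i$ at each step, so it uses distinct row-indices and has length at most $\ell-1$. In particular $[a_1,\ldots,a_m]\in\mathbb{S}_{k-1,\ell-1}$.

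Next I would split the monotone cyclic sub-permutations of $\pi$ by whether they contain the globally minimal element $1$. If an increasing cyclic sub-permutation contains $1$, then read from its minimum it must begin at $1$, hence equals $1$ followed by an increasing subsequence of $[a_1,\ldots,a_m]$; its length is at most $1+(k-1)=k<k+1$. Dually a decreasing cyclic sub-permutation containing $1$ ends at $1$ and has length at most $(\ell-1)+1<\ell+1$. If a monotone cyclic sub-permutation $\tau$ avoids $1$, then reading it from its extreme element wraps around the cut between positions $m$ and $1$ exactly once, splitting $\tau$ in position order into two monotone runs of the same type; in the increasing case this gives $\tau=H\cdot L$ with $H$ on the earlier positions and larger values, $L$ on the later positions and smaller values, and $\max L<\min H$ (and symmetrically $\tau=D_1\cdot D_2$ in the decreasing case).

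The crux is to bound $|H|+|L|$ (respectively $|D_1|+|D_2|$) in this wrap-around case, and here the specific staircase layout, not merely membership in $\mathbb{S}_{k-1,\ell-1}$, is what does the work. In the increasing case $H$ and $L$ each occupy distinct blocks, and the demand that every position of $H$ precede every position of $L$ forces the largest block meeting $H$ to be at most the smallest block meeting $L$; hence their block-sets share at most one block and $|H|+|L|\le(k-1)+1=k$. In the decreasing case $D_1$ and $D_2$ each occupy distinct rows, and now it is the value inequality $\max D_1<\min D_2$ that, since the construction orders values primarily by row, forces the smallest row meeting $D_1$ to be at least the largest row meeting $D_2$; so their row-sets overlap in at most one row and $|D_1|+|D_2|\le(\ell-1)+1=\ell$. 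Combined with the ``contains~$1$'' estimates, this rules out increasing cyclic sub-permutations of length $k+1$ and decreasing ones of length $\ell+1$. I expect the main obstacle to be the careful bookkeeping at the single shared block or row: one must verify that the position and value constraints are simultaneously satisfiable there yet can never be met by a second shared coordinate, and that the passage from ``reading a cyclic sequence at its extreme element'' to the linear $H\cdot L$ decomposition is justified in every boundary configuration.

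Finally, the construction in part~\ref{struct2} is the transpose of part~\ref{struct1}: it consists of $\ell-1$ increasing blocks of length $k-1$ with decreasing behavior across blocks, so that its grid coordinates again fill $[\ell-1]\times[k-1]$ but with position now aligned to the row-coordinate and value to the column-coordinate. The identical argument, with the roles of position and value (equivalently of rows and columns, of increasing and decreasing, and of $k$ and $\ell$) interchanged, then disposes of it: the increasing wrap-around bound now comes from the value constraint and the decreasing one from the position constraint, and both again cap the relevant sums at $k$ and $\ell$ respectively.
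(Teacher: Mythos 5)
Your proposal is correct and follows essentially the same route as the paper: for structure~\ref{struct1} you partition the linear word into the $k-1$ decreasing position-blocks and the $\ell-1$ increasing value-groups, observe that a monotone cyclic sub-permutation either passes through $1$ (contributing at most one element per block/group plus the element $1$) or wraps around and can repeat at most one block (resp.\ one value-group), and handle structure~\ref{struct2} by the symmetric argument. Your explicit $H\cdot L$ (resp.\ $D_1\cdot D_2$) decomposition is just a cleaner formalization of the paper's ``at most $2$ elements in one $D_j$ and at most $1$ in each other'' count, and the bookkeeping you flag as a possible obstacle goes through exactly as you describe.
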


\begin{proof}
The lemma is trivial when $\min(k,\ell)=1$.
Assume $\min(k,\ell)\ge 2$ and $\pi = (1,a_1 \ldots, a_{(k-1)(\ell-1)})$, where 
$[a_1,\ldots,a_{(k-1)(\ell-1)}]$ is given by  Theorem~\ref{th:lowerbound}
part~\ref{atleast3}~\ref{struct1}, i.e. for each $(i,j)\in[\ell-1]\times[k-1]$  $a_{(j-1)(\ell-1)+i}=(\ell-1-i)(k-1)+j+1$. (The example given in Figure~\ref{fig:extremal example} for $k=4$ and $\ell=5$ is
$\pi = (1,11,8,5,2,12,9,6,3,13,10,7,4)$.) The other case can be handled analogously.	
	
\noindent We claim $\pi$ does not have an increasing cyclic sub-permutation of length $k+1$ nor does it have a cyclic sub-permutation of length $\ell+1$.
Starting from $a_1$, we can partition the sequence $A = [a_1,\ldots, a_{(k-1)(\ell-1)}]$ into $(k-1)$ decreasing sub-sequences $D_1,\ldots,D_{k-1}$, 
	each consisting of $(\ell-1)$  consecutive elements of the original sequence. In particular, $D_i = [a_{t}, a_{t+1}, \ldots, a_{t+\ell-2}]$ where $t = (i-1)(\ell-1)+1$. In Figure~\ref{fig:extremal example}, this partition corresponds to $[11,8,5,2]$, $[12,9,6,3]$,$[13,10,7,4]$. 
	Let $L$ be the longest increasing cyclic sub-permutation  of $\pi$. Suppose $L = (a_{i_1}, a_{i_2}, \ldots, a_{i_t})$ where 
	$a_{i_1}< a_{i_2}< \ldots< a_{i_t}$. $L$ and $D_i$ has at most $2$ common elements for each $i$, as the elements in $D_i$ are decreasing in $A$.
	If $a_{i_1} = 1$, then $L$ can contain at most one element from each of the $D_i$s. Since there are at most $k-1$ $D_i$s, it follows that $L$ has length at most $k$. If $a_{i_1} \neq 1$, then $a_{i_1} \in D_{j}$ for some $j \in [k-1]$. In this case,  $1\notin L$. Furthermore, $L$ can have at most 2 elements from $D_j$, and at most one element from $D_i$ for each $i \in [k-1]\backslash \{j\}$. Thus $ L$ has length at most $k$.
	
	We can also partition $A$ into $(\ell-1)$ increasing subsequences $C_1,\ldots,C_{\ell-1}$ of length  $(k-1)$. In particular, let $C_i = [c_i, c_i+1,\ldots, c_i+k-2]$ where $c_i= 2+(i-1)(k-1)$. In the example above, $C_1, C_2, C_3, C_4$ would correspond to $[2,3,4]$, $[5,6,7]$,$[8,9,10]$,$[11,12,13]$. Similar to the analysis above,  let $L$ be the longest decreasing cyclic sub-permutation  of $\pi$. Suppose $L = (a_{i_1}, a_{i_2}, \ldots, a_{i_t})$ where $a_{i_1} > a_{i_2}> \ldots> a_{i_t}$. As before, $L$ can have at most $2$ common elements with each $C_i$.
	If $a_{i_t} = 1$, then $L$ can contain at most one element from each of the $C_i$s. Since there are at most $\ell-1$ $C_i$s, it follows that $L$ has length at most $\ell$. If $a_{i_t} \neq 1$, observe that if for some $j$ $L$ has $2$ common elements with $C_j$, then every other $C_i$ ($i\neq j$) can contain at most one element from $L$ since numbers in $C_t$ are strictly larger than all numbers in $C_s$ for $s<t$. Thus $ L$ has length at most $\ell$.
	\end{proof}

Theorem~\ref{th:main} follows from Lemma~\ref{lm:upper} and~\ref{lm:lower}.

\section{The structure of the extremal examples in the linear Erd\H{o}s-Szekeres problem}\label{sc:linear}

We will first consider the linear problem, i.e. sub-permutations will be linear sub-permutations. We will emphasize this by using the vector notation
$\vec{a}=[a_1,\ldots,a_n]$ when talking about linear permutations. Recall the definition of $\gamma_{\vec{a}}, R_{\vec{a}}, V_{\vec{a}}$ in Definition \ref{dfn:grid-function} and \ref{dfn:grid-pair}. It is easy to see that $\gamma_{\vec{a}}$ is an injective (and therefore bijective) function, since for $t_1< t_2$ we have $a_{t_1}\ne a_{t_2}$ and either every increasing sequence ending at $a_{t_1}$ can be extended to an increasing sequence ending at $a_{t_2}$, or every decreasing sequence. The following are immediate from the definitions and prior statements in the lemma:
\begin{lemma}\label{lm:injection} Let $\vec{a}\in\mathbb{S}_{k,\ell}$. The following are true:
\begin{enumerate}[label=\rm{(\arabic*)}]
\item\label{helper} Let $t_1,t_2\in[k\ell]$ such that $t_1<t_2$, and define $i_1,i_2,j_1,j_2$ by $\gamma_{\vec{a}}(t_q)=(i_q,j_q)$ for $q\in [2]$. If $a_{t_1}<a_{t_2}$ then $j_1<j_2$ and if
$a_{t_1}>a_{t_2}$ then $i_1<i_2$.
\item\label{rankingorder} Let $i_2\le i_1$, $j_2\le j_1$ and $\gamma_{\vec{a}}(t_q)=(i_q,j_q)$ where $q\in [2]$. Then $t_2\le t_1$.
\item\label{ranking} $R_{\vec{a}}\in\mathbb{Y}_{\ell,k}$.
\item \label{valuationorder}
For any $i\in[\ell],j\in[k]$ the sequence
$[a_{\gamma^{-1}_{\vec{a}}(i,1)},\ldots,a_{\gamma^{-1}_{\vec{a}}(i,k)}]$ is an increasing subsequence of $\vec{a}$ and the sequence
$[a_{\gamma^{-1}_{\vec{a}}(1,j)},\ldots,a_{\gamma^{-1}_{\vec{a}}(\ell,j)}]$ is a decreasing subsequence of $\vec{a}$.
\item\label{valuation} $V_{\vec{a}}\in\mathbb{Y}_{\ell,k}$.
\item\label{phiinject} $\phi:\mathbb{S}_{k,\ell}\rightarrow \mathbb{Y}_{\ell,k}\times\mathbb{Y}_{\ell,k}$ defined by $\phi(\vec{a})=(R_{\vec{a}},V_{\vec{a}})$ is an injective function
\end{enumerate}
\end{lemma}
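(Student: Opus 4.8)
The plan is to regard parts~\ref{helper} and~\ref{rankingorder} as the two foundational facts describing how $\gamma_{\vec{a}}$ translates the position order of $\vec{a}$ into the grid order, and then to extract parts~\ref{ranking}--\ref{phiinject} as formal consequences. For part~\ref{helper} I would argue directly from the definition of $\gamma_{\vec{a}}$: if $t_1<t_2$ and $a_{t_1}<a_{t_2}$, then any increasing subsequence of $\vec{a}$ ending at $a_{t_1}$ can be lengthened by appending $a_{t_2}$, so applying this to a \emph{longest} such subsequence gives $j_2\ge j_1+1$, hence $j_1<j_2$; the case $a_{t_1}>a_{t_2}$ is symmetric with decreasing subsequences and yields $i_1<i_2$. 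Part~\ref{rankingorder} is then essentially the contrapositive: assuming toward a contradiction that $t_1<t_2$ while $i_2\le i_1$ and $j_2\le j_1$, I would split on whether $a_{t_1}<a_{t_2}$ or $a_{t_1}>a_{t_2}$ and invoke part~\ref{helper} in each case to contradict one of the two inequalities, handling the degenerate case $(i_1,j_1)=(i_2,j_2)$ by the injectivity of $\gamma_{\vec{a}}$ noted just before the lemma.

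With these in hand the tableau statements are bookkeeping. For part~\ref{ranking}, the bijectivity of $\gamma_{\vec{a}}$ already makes $R_{\vec{a}}$ a filling of the $\ell\times k$ rectangle using each of $1,\ldots,k\ell$ exactly once, so I only need monotonicity along rows and columns; comparing adjacent cells $(i,j)$ and $(i,j+1)$, and $(i,j)$ and $(i+1,j)$, and applying part~\ref{rankingorder} gives $r_{ij}<r_{i,j+1}$ and $r_{ij}<r_{i+1,j}$ respectively. Part~\ref{valuationorder} runs the same comparison one level up: along a fixed row of $R_{\vec{a}}$ the positions increase by part~\ref{ranking}, so part~\ref{helper} forbids the corresponding values from decreasing (that would force the first coordinate to rise), which makes $[a_{\gamma^{-1}_{\vec{a}}(i,1)},\ldots,a_{\gamma^{-1}_{\vec{a}}(i,k)}]$ increasing; dually, along a fixed column the values must decrease. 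Part~\ref{valuation} is then immediate: the rows of $V_{\vec{a}}$ are exactly the increasing row-sequences of part~\ref{valuationorder}, while its columns are the reversals (via the $\ell+1-i$ flip) of the decreasing column-sequences and hence increasing, and the entries form a permutation of $[k\ell]$ because $\gamma^{-1}_{\vec{a}}$ enumerates every position of $\vec{a}$.

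Finally, for part~\ref{phiinject} I would prove injectivity by explicitly reconstructing $\vec{a}$ from the pair $(R_{\vec{a}},V_{\vec{a}})$: the tableau $R_{\vec{a}}$ records, for each position $t$, the unique cell $(i,j)$ with $\gamma_{\vec{a}}(t)=(i,j)$ (namely the cell containing the entry $t$), and then $V_{\vec{a}}$ supplies the value via $a_t=v_{\ell+1-i,\,j}$. Since every $a_t$ is thereby determined by $(R_{\vec{a}},V_{\vec{a}})$, equal images force equal permutations. I do not expect a genuine obstacle in this lemma, since the substance is concentrated in parts~\ref{helper} and~\ref{rankingorder}; the only real care is clerical, namely consistently separating the \emph{position} data carried by $R_{\vec{a}}$ from the \emph{value} data carried by $V_{\vec{a}}$, and tracking the $\ell+1-i$ row-reversal in the definition of $V_{\vec{a}}$ so that the monotonicity directions come out correctly and the inversion in part~\ref{phiinject} lands on the intended cell.
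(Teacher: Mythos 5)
Your proposal is correct and follows essentially the same route as the paper: part~\ref{helper} by extending a longest monotone subsequence, part~\ref{rankingorder} as its contrapositive, parts~\ref{ranking}--\ref{valuation} by adjacent-cell comparisons, and part~\ref{phiinject} by reconstructing $\vec{a}$ from $(R_{\vec{a}},V_{\vec{a}})$. Your write-up is in fact slightly more explicit than the paper's (which compresses several of these steps into one-line implications), but there is no substantive difference.
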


\begin{proof}
\ref{helper} follows from the fact that if $a_{t_1}<a_{t_2}$ ($a_{t_1}>a_{t_2}$) then any increasing (decreasing) 
subsequence of $\vec{a}$ ending on $a_{t_1}$ can be extended to a longer increasing (decreasing) subsequence ending at $a_{t_2}$.
This in turn implies \ref{rankingorder}, which gives \ref{ranking}.
\ref{rankingorder} implies that for any $i\in[\ell],j\in[k]$  the sequences  $[\gamma^{-1}(i,1),\gamma^{-1}(i,2),\ldots,\gamma^{-1}(i,k)]$ and
$[\gamma^{-1}(1,j),\gamma^{-1}(2,j),\ldots,\gamma^{-1}(\ell,j)]$ are increasing, and this together with \ref{helper} gives \ref{valuationorder}.
\ref{valuation} follows from \ref{valuationorder}. \ref{ranking} and \ref{valuation} gives that $\phi$ is a well-defined function, and it follows from the definitions that
$\phi$ must be injective, so \ref{phiinject} is true.
\end{proof}

The proof of Theorem~\ref{th:linear} is finished by showing that
\begin{lemma} Let $R=(r_{ij}),V=(v_{ij})\in\mathbb{Y}_{\ell,k}$ and define the sequence $\vec{a}=[a_1,\ldots,a_{k\ell}]$
by $a_t=v_{ij}$ if and only if $t=r_{\ell+1-i,j}$. 
Then $\vec{a}\in\mathbb{S}_{k,\ell}$, $R=R_{\vec{a}}$ and $V=V_{\vec{a}}$. Consequently,
the function $\phi$ defined in {\rm Lemma~\ref{lm:injection}} is a bijection.
\end{lemma}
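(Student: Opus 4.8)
The plan is to reverse-engineer the definition and reduce the entire statement to a single identity about the grid-function. Reparametrizing by $p=\ell+1-i$ and $q=j$, the defining relation reads $a_{r_{p,q}}=v_{\ell+1-p,q}$ for every $(p,q)\in[\ell]\times[k]$. Since $(i,j)\mapsto r_{\ell+1-i,j}$ is a bijection $[\ell]\times[k]\to[k\ell]$ (as $R\in\mathbb{Y}_{\ell,k}$ has entry set $[k\ell]$) and $V\in\mathbb{Y}_{\ell,k}$ also has entry set $[k\ell]$, the sequence $\vec{a}$ is a well-defined permutation of $[k\ell]$. Everything then follows from the claim that $\gamma_{\vec{a}}(r_{p,q})=(p,q)$ for all $(p,q)$: this gives $\gamma_{\vec{a}}^{-1}(p,q)=r_{p,q}$, hence $R_{\vec{a}}=R$; the valuation formula then yields $(V_{\vec{a}})_{ij}=a_{\gamma_{\vec{a}}^{-1}(\ell+1-i,j)}=a_{r_{\ell+1-i,j}}=v_{ij}$, hence $V_{\vec{a}}=V$; and so $\phi(\vec{a})=(R,V)$, giving surjectivity, which together with injectivity from Lemma~\ref{lm:injection}\ref{phiinject} proves $\phi$ is a bijection.

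First I would establish membership $\vec{a}\in\mathbb{S}_{k,\ell}$ via two partitions of the position set $[k\ell]$. For a fixed column index $q$, the positions $r_{1,q}<\cdots<r_{\ell,q}$ (increasing since $R$ is column-increasing) carry values $v_{\ell,q}>\cdots>v_{1,q}$ (decreasing since $V$ is column-increasing), so each of the $k$ columns of $R$ is a decreasing subsequence of $\vec{a}$; as any increasing subsequence contains at most one element of each such class, the longest increasing subsequence has length at most $k$. Symmetrically, the $\ell$ rows of $R$ are increasing subsequences of $\vec{a}$ (their values run along rows of $V$), so the longest decreasing subsequence has length at most $\ell$. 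Thus $\vec{a}$ has no increasing subsequence of length $k+1$ and no decreasing one of length $\ell+1$, so $\vec{a}\in\mathbb{S}_{k,\ell}$.

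Next I would record the matching lower bounds at each position: the prefix $r_{p,1}<\cdots<r_{p,q}$ of row $p$ is an increasing subsequence of length $q$ ending at $r_{p,q}$, and the prefix $r_{1,q}<\cdots<r_{p,q}$ of column $q$ is a decreasing subsequence of length $p$ ending at $r_{p,q}$; writing $\gamma_{\vec{a}}(r_{p,q})=(i'(p,q),j'(p,q))$, this shows $i'(p,q)\ge p$ and $j'(p,q)\ge q$. The hard part will be the reverse inequality: the partitions above bound the monotone subsequences globally but do not localize the grid-function at an individual position. I would resolve this with a counting pinch. Since $\vec{a}\in\mathbb{S}_{k,\ell}$, the map $\gamma_{\vec{a}}$ is a bijection $[k\ell]\to[\ell]\times[k]$, so $f=\gamma_{\vec{a}}\circ\rho$, where $\rho:[\ell]\times[k]\to[k\ell]$ is the bijection $\rho(p,q)=r_{p,q}$, is a bijection of $[\ell]\times[k]$ onto itself with $f(p,q)\ge(p,q)$ in both coordinates. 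Because $f$ is a bijection, the sum of the first coordinates of $f(p,q)$ over all cells equals $\sum_{(p,q)}p$, and similarly for second coordinates; combined with the termwise inequalities $i'(p,q)\ge p$ and $j'(p,q)\ge q$, equality of the sums forces $i'(p,q)=p$ and $j'(p,q)=q$ for every cell.

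This yields $\gamma_{\vec{a}}(r_{p,q})=(p,q)$ and hence the three assertions as explained above, identifying $\phi$ as a bijection. I expect the reparametrization, the partition argument, and the explicit lower-bound subsequences to be short and routine; the only delicate point is the localization of $\gamma_{\vec{a}}$, which the summation argument handles cleanly by trading a per-position upper bound for a global conservation identity valid once membership in $\mathbb{S}_{k,\ell}$ is known.
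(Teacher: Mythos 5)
Your proof is correct and follows essentially the same route as the paper's: well-definedness of $\vec{a}$, membership in $\mathbb{S}_{k,\ell}$ via the monotone row/column classes induced by $R$ and $V$, the coordinatewise inequality $\gamma_{\vec{a}}(r_{p,q})\ge(p,q)$ from the row/column prefixes, and bijectivity forcing equality. Your partition phrasing of the membership step is just the contrapositive of the paper's pigeonhole contradiction, and your explicit coordinate-sum argument for the final ``pinch'' is a welcome detail that the paper leaves implicit in the phrase ``since both $\gamma_{\vec{a}}$ and $\gamma$ are bijections \dots we get that $\gamma_{\vec{a}}(r_{i,j})=(i,j)$.''
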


\begin{proof} 
From the fact that the entries of $V$ (and also the entries of $R$) are unique, it follows that $\vec{a}$ is a well-defined permutation of $[k\ell]$. To show,
$\vec{a}\in\mathbb{S}_{k,\ell}$, it is enough to show that $\vec{a}$ does not have an increasing subsequence of length $k+1$ or a decreasing subsequence of length $\ell+1$.
Assume to the contrary that $[a_{t_1}\ldots,a_{t_{k+1}}]$ is an increasing subsequence of length $k+1$ of $\vec{a}$.
For 
each $q\in[k+1]$ define $(i_q,j_q)$ by  $a_{t_q}=v_{i_qj_q}$. By the pigeonhole principle there is a $q_1<q_2$ such that
$j_{q_1}=j_{q_2}$. Since $V\in\mathbb{Y}_{\ell,k}$, $t_{q_1}<t_{q_2}$ and $a_{t_1}<a_{t_2}$, this implies $i_{q_1}<i_{q_2}$,
so $\ell+1-i_{q_1}>\ell+1-i_{q_2}$, which together with $R\in\mathbb{Y}_{k,\ell}$ gives $t_{q_1}>t_{q_2}$, a contradiction.
The statement that $\vec{a}$ does not have a decreasing subsequence of length $\ell+1$ follows similarly, so $\vec{a}\in\mathbb{S}_{k,\ell}$.
Fix an $i\in[\ell]$ and define the sequence $\vec{t}=[t_1,\ldots,t_k]$ by $t_q=r_{i,q}$. Since
$R\in\mathbb{Y}_{\ell,k}$, $\vec{t}$ is an increasing sequence. Moreover, since $a_{t_q}=v_{\ell+1-i,q}$ and
$V\in\mathbb{Y}_{\ell,k}$, $[a_{t_1},\ldots,a_{t_k}]$ is an increasing subsequence of $\vec{a}$. 
Similarly for any $j\in[k]$ define $\vec{w}=[w_1,\ldots,w_{\ell}]$ by $w_q=r_{q,j}$, then
$\vec{w}$ is increasing and $[a_{w_1},\ldots,a_{w_{\ell}}]$ is a decreasing subsequence of $\vec{a}$.
This implies that for each $i\in[\ell]$ and $j\in[k]$,
$\gamma_{\vec{a}}({r_{i,j}})=(i',j')$ where $i'\ge i$ and $j'\ge j$. Since both $\gamma_{\vec{a}}$ and $\gamma$ are bijections from
$[k\ell]$ to $[\ell]\times[k]$,
we get that $\gamma_{\vec{a}}(r_{i,j})=(i,j)$ and so $r_{ij}=\gamma^{-1}_{\vec{a}}(i,j)$. Thus we obtain $R=R_{\vec{a}}$.
Since for $V_{\vec{a}}=(v^{\star}_{ij})$ we have by definition that $v^{\star}_{ij}=a_{\gamma_{\vec{a}}^{-1}(\ell+1-i,j)}=a_{r_{\ell+1-i,j}}=v_{ij}$, 
we obtain $V=V_{\vec{a}}$. So $\phi(\vec{a})=(R,V)$, therefore $\phi$ is surjective, which together with Lemma~\ref{lm:injection} part~\ref{phiinject} gives that $\phi$ is a bijection.
\end{proof}

We remark that similar ideas appear in \cite{youngtableau} to find the longest increasing subsequence of a sequence.
Fix  $k,\ell\ge 1$ and set $n=k\ell$. Note that the above results imply that if we represent the sequence $\vec{a}=[a_1,\ldots,a_n]$ as the set of $n$ points
$(t,a_t)$ and connect two points $(t_1,a_{t_1})$ and $(t_2,a_{t_2})$ precisely when $\gamma_{\vec{a}}(t_1)$ and $\gamma_{\vec{a}}(t_1)$ agree in one of the coordinates and differ by $1$ on the other, then we get a (potentially somewhat distorted) $\ell\times k$  grid where the slope
of the line from $t_1$ to $t_2$ is positive exactly when $\gamma_{\vec{a}}(t_2)$ agrees with $\gamma_{\vec{a}}(t_1)$ on the first coordinate, and negative otherwise.
The grid may be distorted in the sense that it is formed by quadrangles that are not necessarily rectangles and are not necessarily isomorphic, and the
grid ``balances on one of its corners"; in fact it balances on the grid-point indexed $(\ell+1,1)$ with sequence value $1$. 
Indeed, any sequence $[a_1,\ldots,a_n]$ that is a permutation of $[n]$ is in $\mathbb{S}_{k,\ell}$ precisely when such a grid can be fit on its
$n$-point representation in the plane (where the corner on which the distorted grid balances is the grid-point $(\ell+1,1)$ and has height  $1$). See Figure~\ref{fig:distorted_grid} for illustration.

	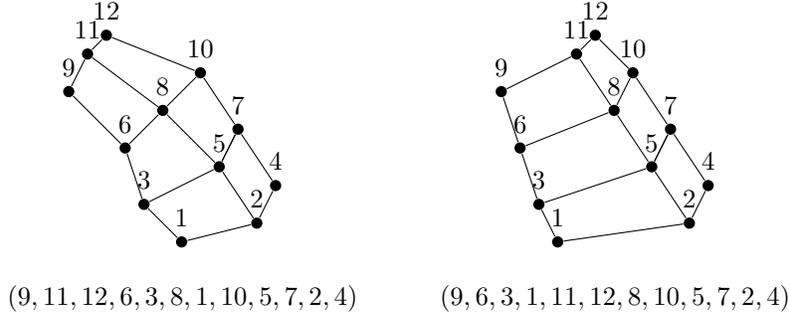
\begin{figure}[hbt]
		\begin{center}
			\tikzstyle{vertex}=[circle,fill=black,inner sep=1.5pt]
			\tikzstyle{straight edge}=[]
			
			\begin{tikzpicture}
			\begin{pgfonlayer}{nodelayer}
			\node [style=vertex,label=above:{$1$}] (A1) at (1.75, 0.25) {};
			\node [style=vertex,label=above:{$2$}] (A2) at (2.75, 0.5) {};
			\node [style=vertex,label=above:{$3$}] (A3) at (1.25, 0.75) {};
			\node [style=vertex,label=above:{$4$}] (A4) at (3, 1) {};
			\node [style=vertex,label=above:{$5$}] (A5) at (2.25, 1.25) {};
			\node [style=vertex,label=above:{$6$}] (A6) at (1., 1.5) {};
			\node [style=vertex,label=above:{$7$}] (A7) at (2.5, 1.75) {};
			\node [style=vertex,label=above:{$8$}] (A8) at (1.5, 2) {};
			\node [style=vertex,label=above:{$9$}] (A9) at (0.25, 2.25) {};
			\node [style=vertex,label=above:{$10$}] (A10) at (2, 2.5) {};
			\node [style=vertex,label=above:{$11$}] (A11) at (0.5, 2.75) {};
			\node [style=vertex,label=above:{$12$}] (A12) at (0.75, 3) {};
			\node at (1.75,-.5) {$(9,11,12,6,3,8,1,10,5,7,2,4)$};
			\node [style=vertex,label=above:{$1$}] (B1) at (6.75, 0.25) {};
			\node [style=vertex,label=above:{$2$}] (B2) at (8.5, 0.5) {};
			\node [style=vertex,label=above:{$3$}] (B3) at (6.5, 0.75) {};
			\node [style=vertex,label=above:{$4$}] (B4) at (8.75, 1) {};
			\node [style=vertex,label=above:{$5$}] (B5) at (8, 1.25) {};
			\node [style=vertex,label=above:{$6$}] (B6) at (6.25, 1.5) {};
			\node [style=vertex,label=above:{$7$}] (B7) at (8.25, 1.75) {};
			\node [style=vertex,label=above:{$8$}] (B8) at (7.5, 2) {};
			\node [style=vertex,label=above:{$9$}] (B9) at (6., 2.25) {};
			\node [style=vertex,label=above:{$10$}] (B10) at (7.75, 2.5) {};
			\node [style=vertex,label=above:{$11$}] (B11) at (7, 2.75) {};
			\node [style=vertex,label=above:{$12$}] (B12) at (7.25, 3) {};
			\node at  (7.5,-0.5) {$(9,6,3,1,11,12,8,10,5,7,2,4)$};
			
			\end{pgfonlayer}
			\begin{pgfonlayer}{edgelayer}
			\draw [style=straight edge] (A9) to (A11);
			\draw [style=straight edge] (A11) to (A12);
			\draw [style=straight edge] (A6) to (A8);
			\draw [style=straight edge] (A8) to (A10);
			\draw [style=straight edge] (A3) to (A5);
			\draw [style=straight edge] (A5) to (A7);
			\draw [style=straight edge] (A1) to (A2);	
			\draw [style=straight edge] (A2) to (A4);								
			\draw [style=straight edge] (A9) to (A6);
			\draw [style=straight edge] (A6) to (A3);
			\draw [style=straight edge] (A3) to (A1);
			\draw [style=straight edge] (A11) to (A8);
			\draw [style=straight edge] (A8) to (A5);
			\draw [style=straight edge] (A5) to (A2);
			\draw [style=straight edge] (A5) to (A7);
			\draw [style=straight edge] (A12) to (A10);			
			\draw [style=straight edge] (A10) to (A7);
			\draw [style=straight edge] (A7) to (A4);
			\draw [style=straight edge] (B9) to (B11);
			\draw [style=straight edge] (B11) to (B12);
			\draw [style=straight edge] (B6) to (B8);
			\draw [style=straight edge] (B8) to (B10);
			\draw [style=straight edge] (B3) to (B5);
			\draw [style=straight edge] (B5) to (B7);
			\draw [style=straight edge] (B1) to (B2);	
			\draw [style=straight edge] (B2) to (B4);								
			\draw [style=straight edge] (B9) to (B6);
			\draw [style=straight edge] (B6) to (B3);
			\draw [style=straight edge] (B3) to (B1);
			\draw [style=straight edge] (B11) to (B8);
			\draw [style=straight edge] (B8) to (B5);
			\draw [style=straight edge] (B5) to (B2);
			\draw [style=straight edge] (B5) to (B7);
			\draw [style=straight edge] (B12) to (B10);			
			\draw [style=straight edge] (B10) to (B7);
			\draw [style=straight edge] (B7) to (B4);
			\end{pgfonlayer}
			\end{tikzpicture}
		\end{center}
		\caption{Two examples of extremal sequences for the linear Erd\H{o}s-Szekeres theorem for $k=4$ and $\ell=5$ with distorted grid representation.
		They have the same valuation but different ranking.}\label{fig:distorted_grid}
	\end{figure}

\section{The structure of the extremal examples in the circular Erd\H{o}s-Szekeres problem}
\label{sc:lowerbound}

We devote this section to the proof of Theorem~\ref{th:lowerbound}. 
The statement is obvious when $\min(k,\ell)=1$, so we assume that $\min(k,\ell)\ge 2$.
For this case we have shown in Lemma~\ref{lm:lower} that the structures described in Theorem~\ref{th:lowerbound} are all in $\mathbb{C}_{k,\ell}$, the proof of Theorem~\ref{th:lowerbound} is finished by showing that these structures are the only elements od $\mathbb{C}_{k,\ell}$. 
Moreover, since any cyclic permutation of length at least $3$ that is not the increasing (decreasing) permutation contains a decreasing (increasing) sub-permutation of length at least $3$, the statement follows when $\min(k,\ell)=2$. So it is enough to focus on the case when $\min(k,\ell)\ge 3$.

We will define $\mathbb{C}^{\star}_{k,\ell}$ as the set of those sequences in $\mathbb{S}_{k-1,\ell-1}$ that, taken as as cyclic permutations have no increasing cyclic sub-permutation of length 
$k+1$,  and no decreasing cyclic sub-permutations of length $\ell+1$. For the ease of reference, given a sequence $\vec{\rho}\in\mathbb{C}^{\star}_{k,\ell}$ we will use $\rho$ to denote the cyclic permutation corresponding to $\vec{\rho}$.

As an increasing (decreasing) cyclic sub-permutation of a cyclic permutation either starts (ends) with $1$ or does not contain $1$, the following is obvious:
\begin{lemma}\label{lem:equiv} Let $k,\ell\in\mathbb{Z}$ with $\min(k,\ell)\ge 2$.
$(1,a_1,\ldots,a_{(k-1)(\ell-1)})\in\mathbb{C}_{k,\ell}$  
if and only if $[a_1-1,a_2-1,\ldots,a_{(k-1)(\ell-1)}-1]\in\mathbb{C}^{\star}_{k,\ell}$.
\end{lemma}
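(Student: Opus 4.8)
The plan is to prove the biconditional by a clean term-by-term equivalence built on the stated dichotomy, rather than by two separate contradiction arguments. Write $N=(k-1)(\ell-1)$, $\pi=(1,a_1,\ldots,a_N)$ and $\vec{\rho}=[a_1-1,\ldots,a_N-1]$, with $\rho$ the associated cyclic permutation. The engine of the proof is the order-preserving bijection $x\mapsto x-1$ from $\{2,\ldots,N+1\}$ onto $[N]$, together with the observation that in $\pi$ the element $1$ is the global minimum and occupies one fixed slot of the cycle, so that the remaining elements $a_1,\ldots,a_N$ appear in exactly this cyclic order.

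First I would record three correspondences. (i) Any increasing (resp.\ decreasing) cyclic sub-permutation of $\pi$ that does \emph{not} contain $1$ maps, under the shift $x\mapsto x-1$, to an increasing (resp.\ decreasing) cyclic sub-permutation of $\rho$ of the same length, and this is a bijection onto all such sub-permutations of $\rho$; this is immediate because the shift preserves order and the cyclic order of $\{a_1,\ldots,a_N\}$ in $\pi$ is precisely the cyclic order of $\rho$. (ii) Because $1$ is the minimum, any increasing cyclic sub-permutation of $\pi$ \emph{containing} $1$ must, in its canonical increasing form, have $1$ as its first entry; deleting $1$ and reading the cycle starting right after the fixed slot of $1$ yields an increasing \emph{linear} subsequence of $[a_1,\ldots,a_N]$, hence of $\vec{\rho}$, whose length is one less, and conversely any increasing linear subsequence of $\vec{\rho}$ of length $m$ prepends $1$ to give an increasing cyclic sub-permutation of $\pi$ of length $m+1$. (iii) Dually, a decreasing cyclic sub-permutation of $\pi$ containing $1$ must end with $1$, and deleting it gives a decreasing linear subsequence of $\vec{\rho}$ of length one less, with the same converse.

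With these in hand the equivalence assembles mechanically. The condition ``$\pi$ has no increasing cyclic sub-permutation of length $k+1$'' splits, by the dichotomy, into ``$\vec{\rho}$ has no increasing linear subsequence of length $k$'' (from those containing $1$, via (ii)) and ``$\rho$ has no increasing cyclic sub-permutation of length $k+1$'' (from those avoiding $1$, via (i)). Likewise ``$\pi$ has no decreasing cyclic sub-permutation of length $\ell+1$'' splits into ``$\vec{\rho}$ has no decreasing linear subsequence of length $\ell$'' and ``$\rho$ has no decreasing cyclic sub-permutation of length $\ell+1$''. Conjoining the two linear conditions is exactly $\vec{\rho}\in\mathbb{S}_{k-1,\ell-1}$, and conjoining the two cyclic conditions is exactly the extra requirement in the definition of $\mathbb{C}^{\star}_{k,\ell}$; together they say $\vec{\rho}\in\mathbb{C}^{\star}_{k,\ell}$. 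Hence $\pi\in\mathbb{C}_{k,\ell}$ if and only if $\vec{\rho}\in\mathbb{C}^{\star}_{k,\ell}$.

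There is no real obstacle here—the statement is genuinely routine, which is why the paper calls it obvious. The only point needing care is the justification in (ii) and (iii) that an extremal monotone cyclic sub-permutation meeting $1$ must place $1$ at the extreme end (first for increasing, last for decreasing), so that cutting the cycle at the fixed slot of $1$ faithfully converts the cyclic statement into a linear one without losing or double-counting sub-permutations; once this bookkeeping is pinned down, everything follows from order-preservation of the shift $x\mapsto x-1$.
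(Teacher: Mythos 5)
Your proof is correct and follows essentially the same route as the paper, which simply declares the lemma obvious from the very dichotomy you elaborate (a monotone cyclic sub-permutation of $(1,a_1,\ldots,a_{(k-1)(\ell-1)})$ either has $1$ at its extreme end or avoids $1$ entirely, and the shift $x\mapsto x-1$ preserves both order and cyclic position). Your write-up just makes explicit the bookkeeping the authors leave implicit.
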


By the above Lemma, to characterize the extremal examples in the cyclic Erd\H{o}s-Szekeres theorem it is enough to determine 
$\mathbb{C}^{\star}_{k,\ell}$. The proof of Theorem~\ref{th:lowerbound} is concluded by showing that
\begin{lemma} Let $k,\ell\in\mathbb{Z}$ with $\min(k,\ell)\ge 3$ and $\vec{\rho}=[a_1,\ldots,a_{(k-1)(\ell-1)}]\in\mathbb{C}^{\star}_{k,\ell}$. Then we have one of the following:
\begin{enumerate}[label=\rm{(\roman*)}]
\item\label{first} For each $i\in[\ell-1]$ and $j\in[k-1]$ $a_{(j-1)(\ell-1)+i}=(\ell-1-i)(k-1)+j$.
\item\label{second} For $i\in[\ell-1]$ and $j\in[k-1]$ $a_{(i-1)(k-1)+j}=(j-1)(\ell-1)+(\ell-i)$.
\end{enumerate}
\end{lemma}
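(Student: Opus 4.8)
The plan is to leverage the linear characterization from Theorem~\ref{th:linear} and Lemma~\ref{lm:injection}. Since $\vec{\rho}\in\mathbb{C}^{\star}_{k,\ell}\subseteq\mathbb{S}_{k-1,\ell-1}$, the grid-function $\gamma_{\vec{\rho}}$ is a bijection from $[(k-1)(\ell-1)]$ to $[\ell-1]\times[k-1]$, and $\vec{\rho}$ is completely determined by its grid-ranking $R_{\vec{\rho}}$ and grid-valuation $V_{\vec{\rho}}$, both standard Young tableaux on the $(\ell-1)\times(k-1)$ rectangle. The two target structures~\ref{first} and~\ref{second} should correspond to two very rigid choices of this $(R,V)$ pair, so the whole problem reduces to showing that the \emph{additional} cyclic constraint — no increasing \emph{cyclic} sub-permutation of length $k+1$ and no decreasing \emph{cyclic} one of length $\ell+1$ — forces $R_{\vec{\rho}}$ and $V_{\vec{\rho}}$ into exactly these two configurations.

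First I would translate the cyclic constraints into statements about $\gamma_{\vec{\rho}}$. The key observation is the one already used in Lemma~\ref{lm:lower}: an increasing cyclic sub-permutation of $\rho$ of length $k+1$ is a cyclic arrangement, so when read starting after its smallest element it becomes an increasing linear subsequence that ``wraps around''; concretely, it consists of an increasing linear subsequence together with one more element sitting to its left in the sequence but below it in value (the wrap). Because $\vec{\rho}\in\mathbb{S}_{k-1,\ell-1}$ already forbids linear increasing subsequences of length $k$, the cyclic constraint is really a statement about how two increasing ``runs'' — one a suffix-type piece, one a prefix-type piece — can be concatenated around the cycle. I would make this precise by showing that forbidding an increasing cyclic sub-permutation of length $k+1$ is equivalent to forbidding, for every position $t$, an increasing subsequence of length $k-1$ lying entirely to the right of $t$ whose values all exceed $a_t$ while $a_t$ itself begins another short increasing run; expressed in grid coordinates via Lemma~\ref{lm:injection}\ref{helper} and~\ref{rankingorder}, this becomes a monotonicity/saturation condition on where values of given grid-column can appear.

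The main technical step, and the one I expect to be the hardest, is the rigidity argument: proving that these saturation conditions leave only two possibilities. I would argue that the cyclic constraints force the longest increasing subsequence structure to be maximally ``spread'': in grid terms, the entry $a_t$ with $\gamma_{\vec{\rho}}(t)=(i,j)$ must satisfy tight inequalities relating its value to $i$ and $j$, with no slack. Because $R_{\vec{\rho}},V_{\vec{\rho}}\in\mathbb{Y}_{\ell-1,k-1}$ and the rectangle is ``full'' (we are at the extremal length $(k-1)(\ell-1)$), I expect these tight inequalities to pin $V_{\vec{\rho}}$ down to the row-reading-order tableau, and then pin $R_{\vec{\rho}}$ down to one of exactly two tableaux (the row-major and column-major fillings), each yielding one of~\ref{first} or~\ref{second}. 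The assumption $\min(k,\ell)\ge 3$ is what guarantees \emph{two} distinct solutions rather than a collapse, since for smaller parameters one of the forbidden-length conditions becomes vacuous; I would therefore track carefully where a $k-1\ge 2$ or $\ell-1\ge 2$ is genuinely used, as this is precisely the point that distinguishes the $\min(k,\ell)\ge 3$ case from the degenerate ones already disposed of.

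Finally, I would verify that the two configurations obtained are exactly the sequences written in~\ref{first} and~\ref{second} by direct substitution into the formulas $a_{(j-1)(\ell-1)+i}=(\ell-1-i)(k-1)+j$ and $a_{(i-1)(k-1)+j}=(j-1)(\ell-1)+(\ell-i)$, matching them against the grid data, and confirm via Lemma~\ref{lm:lower} (which already established these lie in $\mathbb{C}^{\star}_{k,\ell}$ after the shift of Lemma~\ref{lem:equiv}) that no case is spurious. The logical shape of the whole proof is thus: reduce to $(R_{\vec{\rho}},V_{\vec{\rho}})$ via Theorem~\ref{th:linear}, encode the two cyclic constraints as saturation conditions on the grid, run the rigidity argument to isolate two tableau pairs, and read off the closed formulas.
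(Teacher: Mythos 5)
Your overall strategy is the same as the paper's: view $\vec{\rho}$ as an element of $\mathbb{S}_{k-1,\ell-1}$, use the grid function $\gamma_{\vec{\rho}}$ and the tableau pair $(R_{\vec{\rho}},V_{\vec{\rho}})$ from Lemma~\ref{lm:injection} and Theorem~\ref{th:linear}, translate the two cyclic prohibitions into wrap-around conditions, and conclude rigidity. The problem is that you have only described the hardest step rather than carried it out: the passage ``I would argue that the cyclic constraints force \dots'' and ``I expect these tight inequalities to pin $V_{\vec{\rho}}$ down \dots'' is precisely the content of the lemma, and nothing in the proposal actually produces the inequalities or shows that they leave exactly two solutions. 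Concretely, what is missing is (a) the dichotomy that generates the two cases --- the paper branches on whether $\gamma^{-1}(\ell-1,1)<\gamma^{-1}(1,k-1)$ or $\gamma^{-1}(\ell-1,1)>\gamma^{-1}(1,k-1)$, i.e.\ on the relative positions of two corner cells of the grid, and your proposal never identifies what parameter distinguishes structure~\ref{first} from structure~\ref{second}; and (b) the explicit wrap-around witnesses. In Case 1 the paper exhibits, for each $i\in[\ell-2]$, the cyclic sub-permutation of length $\ell+1$ obtained by concatenating an initial segment of the column $D_{k-1}$ with a terminal segment of the column $D_1$; since this cannot be decreasing, one gets $c_{\ell-i,k-1}<c_{\ell-i-1,1}$, hence $c_{\ell-i,k-1}\le (k-1)i$, and an induction on $i$ pins down all the values $a_{\gamma^{-1}(i,j)}$. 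A second family of witnesses (concatenating a tail of the row $C_{\ell-1}$ with a head of $C_1$, which cannot be an increasing cyclic sub-permutation of length $k+1$) then pins down all the positions $\gamma^{-1}(i,j)$ by a second induction. Without exhibiting these witnesses and running the two inductions, the ``saturation'' and ``rigidity'' claims are assertions, not proofs.

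One further inaccuracy worth flagging: you predict that $V_{\vec{\rho}}$ is forced to be the row-reading-order tableau in both cases and that only $R_{\vec{\rho}}$ bifurcates. That is not what happens. In the case leading to~\ref{first} the valuation is the row-major filling and the ranking is the column-major filling of the $(\ell-1)\times(k-1)$ rectangle, while in the case leading to~\ref{second} the roles are exchanged; both tableaux change between the two structures. This is not fatal to your plan, but it shows that the rigidity argument cannot be run as a two-stage ``first fix $V$, then fix $R$'' procedure independent of the case split; the value constraints and the position constraints must each be derived within the case determined by the corner comparison above.
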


\begin{proof}
Let $\vec{\rho}=[a_1,\ldots,a_{(k-1)(\ell-1)}]\in\mathbb{C}^{\star}_{k,\ell}\subseteq\mathbb{S}_{k-1,\ell-1}$. 
For shortness, we will use $\gamma$ for $\gamma_{\vec{\rho}}$.
For each $i\in[\ell-1]$, define the sequences $C_i=[c_{i,1},\ldots,c_{i,k-1}]$ by
$c_{i,j}=a_{\gamma^{-1}(i,j)}$ and  for each $j\in[k-1]$, let $D_j=[c_{1,j},c_{2,j},\ldots,c_{\ell-1,j}]$. 
Clearly, $C_1,\ldots,C_{\ell-1}$ and $D_1,\ldots,D_{k-1}$ partition the elements
of $\vec{\rho}$.
By Lemma~\ref{lm:injection} part~\ref{valuationorder} the $C_i$s are increasing and the $D_j$s are decreasing subsequences of $\vec{\rho}$.
As $\vec{\rho}\in\mathbb{C}^{\star}_{k,\ell}$, the cyclic permutation $\rho$ does not have an
 increasing cyclic sub-permutation of length $k+1$ or decreasing cyclic sub-permutation of length $\ell+1$.
We have two possibilities
\begin{description}
	\item Case 1: $\gamma^{-1}(\ell-1,1)< \gamma^{-1}(1,k-1).$

			As for each $j\in[k-1]$, $D_j$ is an decreasing subsequence of $\vec{\rho}$ we get
\begin{eqnarray*}
a_{\gamma^{-1}(1,j)} > a_{\gamma^{-1}(2,j)}>\cdots>a_{\gamma^{-1}(\ell-1,j)} \end{eqnarray*}
Using this for $j\in\{1,k-1\}$ and the condition, for each $i\in[\ell-2]$ we have
		  $$(c_{1,k-1}, c_{2,k-1}, \ldots, c_{\ell-i,k-1}, c_{\ell-i-1,1}, c_{\ell-i,1},\cdots, c_{k-1,1})$$ 
		  is a cyclic sub-permutation of length $\ell+1$ of the cyclic permutation $\rho$.
		  Since this can not be an decreasing sub-permutation, we must have  $c_{\ell-i, k-1} <  c_{\ell-i-1,1}$. 
		  Let $i^{\star}\in[\ell-i-1]$ and $j\in[k-1]$.  
		  As $D_{1}$ is decreasing and $C_{i^{\star}}$ is increasing, we have
		  $c_{\ell-i,k-1}<c_{\ell-i-1,1}\le c_{i^{\star},1} \le c_{i^{\star},j}$ and consequently
		  $c_{\ell-i,k-1}\le (k-1)i$. Using that $C_{\ell-i}$ is increasing, induction on $i$ gives that
		  $c_{\ell-i,j}=a_{\gamma^{-1}(\ell-i,j)}=(i-1)(k-1)+j$.

	Since $C_1$ and $C_{\ell-1}$ are both increasing subsequences of $\vec{\rho}$ and
	$C_{\ell-1}$ contains the smallest $(k-1)$ elements of $[(k-1)(\ell-1)]$, we must have that for each
	$j\in[k-2]$ that $\gamma^{-1}(1,j+1)>\gamma^{-1}(\ell-1,j)$, 
	otherwise
	$$(c_{\ell-1,j},c_{\ell-1,j+1},\ldots,c_{\ell-1,k-1},c_{1,1},c_{1,2},\ldots, c_{1,j+1})$$
	would form an increasing cyclic sub-permutation of length $k+1$ of $\rho$. 
	Using the fact that $D_j$ is a sub-permutation and induction on $j$,
	for each $j\in[k-1]$ we get $\gamma^{-1}(\ell-i,j)=(j-1)(\ell-1)+\ell-i$.
	
	Combining these we must have that for $i\in[\ell-1]$ and $j\in[k-1]$ $a_{(j-1)(\ell-1)+i}=(\ell-1-i)(k-1)+j$, giving case~\ref{first} of this lemma.
		   
	\item Case 2: $\gamma^{-1}(l-1,1) > \gamma^{-1}(1,k-1).$ 
			
	As before, we get that for each $j\in[k-2]$ the sequence
	$$(c_{l-1,1}, c_{l-1,2}, \ldots, c_{l-1,k-j}, c_{1,k-j-1}, c_{1,k-j},\ldots, c_{1,k-1})$$
	 is a cyclic sub-permutation of length $k+1$ of $\rho$, and as it can not be increasing, we have
	 $c_{l-1, k-j} >  c_{1,k-j-1}$. 
	 Using the same logic as in Case 1 we obtain for each $j\in[k-1]$ and $i\in[\ell-1]$
	 $a_{\gamma^{-1}(i,j)}=(j-1)(\ell-1)+(\ell-i)$.
	 
	Again, for each $i\in[\ell-1]$ we have $\gamma^{-1}(i+1,1) > \gamma^{-1}(i,k-1)$, otherwise 
	$$(c_{i,k-1},c_{i+1,k-1},\ldots,c_{\ell-1,k-1},c_{1,1}, c_{2,1},\ldots,c_{i+1,1})$$
	 forms decreasing cyclic sub-permutation of length $\ell+1$ of $\rho$.
	 We obtain that $\gamma^{-1}(i,j)=(i-1)(k-1)+j$.
	  Combining these we must have that for $i\in[\ell-1]$ and $j\in[k-1]$ $a_{(i-1)(\ell-1)+j}=(j-1)(\ell-1)+(\ell-i)$,  
	 giving case~\ref{second} of this lemma.

\end{description}

\end{proof}

For $k,\ell\ge 2$, set $n=(k-1)(\ell-1)$. and consider the sequence $\vec{\rho}=[1,a_1,\ldots,a_n]$; i.e. use the sequence representation of
the cyclic permutation or $\rho$ that starts with $1$.
It is worth noting that $\rho\in\mathbb{C}_{k,\ell}$ precisely when taking the $n+1$ points representing $\vec{\rho}$ in the plane and
putting in the grid lines corresponding to $[a_1-1,\ldots,a_{n}-1]$ described in the end of the previous section to the $n$ points of the form $(i,a_i)$, they form a non-distorted grid, i.e. a grid with rectangles (and not just quadrangles) that are of the same size (in fact, the ratio of the side length of each rectangle is $\frac{k-1}{\ell-1}$), and the point $(1,1)$ lies on either the first or the last line with positive slope, as in Figure~\ref{fig:extremal example}.

\section{Acknowledgement}

We would like to thank  L\'{a}szl\'{o} Sz\'{e}kely for proposing this problem to us and for his many suggestions on this paper. We would also like to thank Joshua Cooper for his valuable input on the topic. We are indebted to the anonymous referees who suggested a number of improvements to this paper.

\end{document}